\begin{document}

\title*{On asymptotics of ring $Q$-homeomorphisms with respect to $p$-modulus near the origin}
% Use \titlerunning{Short Title} for an abbreviated version of
% your contribution title if the original one is too long
\author{Ruslan Salimov and Bogdan Klishchuk}
% Use \authorrunning{Short Title} for an abbreviated version of
% your contribution title if the original one is too long
\institute{Ruslan Salimov \at Ruslan Salimov, Institute of Mathematics of NAS of Ukraine, \email{ruslan.salimov1@gmail.com}
\and Bogdan Klishchuk \at Bogdan Klishchuk, Institute of Mathematics of NAS of Ukraine \email{kban1988@gmail.com}}
%
% Use the package "url.sty" to avoid
% problems with special characters
% used in your e-mail or web address
%
\maketitle

\abstract*{We consider the class of ring $Q$-homeomorphisms with respect to
$p$-modulus in $\mathbb{R}^{n}$ with $p > n$, and obtain lower bounds for
limsups of the distance distortions under such mappings. These estimates can be treated
as H\"{o}lder's continuity of the inverses near the origin.
The sharpness is illustrated by example.}

\abstract{We consider the class of ring $Q$-homeomorphisms with respect to
$p$-modulus in $\mathbb{R}^{n}$ with $p > n$, and obtain lower bounds for
limsups of the distance distortions under such mappings. These estimates can be treated
as H\"{o}lder's continuity of the inverses near the origin.
The sharpness is illustrated by example.}

\section{Introduction}
\label{sec:1}

In this paper, we study the distortion of distances under ring $Q$-homeomorphisms
with respect to $p$-modulus in $\mathbb{R}^{n}$ with $p>n$. For such
mappings we derive lower bounds for upper limits.

Let us recall some definitions, see \cite{Vai}. Let $\Gamma$ be a family of curves
$\gamma$ in $\mathbb{R}^{n}$, $n\geqslant2$. A Borel measurable function
$\rho:{\mathbb{R}^{n}}\to[0,\infty]$ is called {\it admissible} for
$\Gamma$, (abbr. $\rho\in{\rm adm}\,\Gamma$), if
%\begin{equation}\label{eq1.2KR}
$$
 \int\limits_{\gamma}\rho(x)\,ds\
\geqslant\ 1
$$
%\end{equation}
for all locally rectifiable $ \gamma\in\Gamma$. Let $p\in (1,\infty)$.

The quantity
%\begin{equation}\label{eq1.3KR}
$$
M_p(\Gamma)\ =\ \inf_{\rho\in\mathrm{adm}\,\Gamma}\int\limits_{{\mathbb{R}^{n}}}\rho^p(x)\,dm(x)\,
$$
%\end{equation}
is called {\it $p$--modulus} of the family $\Gamma$. Here $dm(x)$ corresponds to the Lebesgue
measure in $\mathbb{R}^{n}$.

Let $Q:D\to[0,\infty]$ be a measurable function. Recall that a homeomorphism
$f:D\to \mathbb{R}^n$ is said to be a {\it $Q$-homeomorphism with respect to
$p$-modulus} if \begin{equation*} \label{eq13.1}
 \mathrm{M}_{p}(f\Gamma)\leqslant\int\limits_{D}
Q(x)\,\rho^{p}(x)\ dm(x)\end{equation*}  for every family $\Gamma$ of
paths in $D$ and every admissible function $\rho$ for $\Gamma$.

For arbitrary sets $E$, $F$ and $G$ of $\mathbb{R}^{n}$ we denote by
$\Delta(E, F, G)$ a set of all continuous curves
$\gamma: [a, b] \rightarrow \mathbb{R}^{n}$, that connect $E$ and $F$ in $G$,
i.e.\,, such that $\gamma(a) \in E$, $\gamma(b) \in F$ and $\gamma(t) \in G$
for $a < t < b$.

Let   $D$ be a domain in $\mathbb{R}^{n}$, $n\geqslant2$, $x_0\in D$ and
$d_0 = {\rm dist}(x_{0},
\partial D)$. Set

%\begin{equation}
%\label{a2}
$$
\mathbb{A}(x_{0}, r_{1}, r_{2}) = \{ x \in \mathbb{R}^{n}: r_{1} < |x-
x_{0}| < r_{2}\} \,,
$$
%\end{equation}

$$
B(x_0,r) = \{x \in \mathbb{R}^{n}: |x-x_0|< r\} \,,
$$

%\begin{equation}
$$
S_{i}=S(x_0,r_{i})=\{x\in \mathbb{R}^{n}: \,  |x-x_0| = r_{i}\}\,, \quad
i = 1,\,2\,.
$$
%\end{equation}

The following concept generalizes and localizes the concept of a $Q$-homeomor\-phism
with respect to $p$-modulus.

Let a function  $Q: D\rightarrow [0,\infty]$ be Lebesgue measurable.
We say that a homeomorphism $f: D \rightarrow \mathbb{R}^{n}$ is a {\it ring
$Q$-homeomorphism with respect to $p$-modulus} at $x_0 \in D$, if the
relation

%\begin{equation}\label{eq1.4KR}
$$
M_p(\Delta(fS_{1}, fS_{2}, fD))\ \leqslant\
\int\limits_{{\mathbb{A}}}Q(x)\,\eta^{p}(|x-x_{0}|)\,dm(x)\,
$$
%\end{equation}
holds for any ring $\mathbb{A} = \mathbb{A}(x_{0}, r_{1}, r_{2})$\,,
$0< r_{1} < r_{2} < d_0$, $d_0 = {\rm dist}(x_{0},
\partial D)$, and for any measurable function
$\eta: (r_{1}, r_{2}) \rightarrow [0,\infty]$ such that

%\begin{equation}\label{eq1.5KR}
$$
\int\limits_{r_{1}}^{r_{2}} \eta(r)\, dr = 1\,.
$$
%\end{equation}
\medskip

The theory of $Q$-homeomorphisms for $p=n$ was developed in
\cite{RS}--\cite{S1}, whereas for $1<p<n$ and $p>n$ see \cite{G1}--\cite{S6} and
\cite{SK1}--\cite{SSK}, respectively.

Denote by $\omega_{n-1}$ the area of the unit sphere
$\mathbb{S}^{n-1} = \{x\in \mathbb{R}^{n}: \, |x| = 1\}$ in $\mathbb{R}^{n}$
and by
$$
q_{x_0}(r)= \frac{1}{\omega_{n-1}\,
r^{n-1}}\int\limits_{S(x_0, r)}Q(x)\, d\mathcal{A}
$$
the integral mean of $Q$ over the sphere $S(x_0,r)=\{x\in \mathbb{R}^{n}: \, |x-x_0| =
r\}$\,, here $d\mathcal{A}$ is the element of the surface area.

\medskip

Now we formulate a criterion which guarantees for a homeomorphism to
be a ring $Q$-homeomorphism with respect to $p$-modulus for $p>1$
in $\mathbb{R}^{n}$, $n\geqslant2$, see Theorem~2.3 in \cite{S2}.

\medskip

\begin{proposition}
Let $D$ be a domain in $\mathbb{R}^{n}$,
$n\geqslant 2$, $x_{0} \in D$, and let $Q:D\to[0,\infty]$ be a Lebesgue measurable
function such that $q_{x_0}(r) \neq \infty$ for a.e.  $r\in(0,
d_0)$,\, $d_0 = {\rm dist}(x_{0}, \partial D)$. A homeomorphism $f:
D \rightarrow \mathbb{R}^{n} $ is a ring $Q$-homeomorphism with respect to
$p$-modulus at $x_{0}$ if and only if the inequality

%\begin{equation}\label{a1}
$$
M_{p}\left(\Delta(fS_{1}, fS_{2}, f\mathbb{A})\right) \leqslant
\frac{\omega_{n-1}}{\left(\int\limits_{r_{1}}^{r_{2}}\frac{dr}{r^\frac{n-1}{p-1}\,q_{x_0}^{\frac{1}{p-1}}(r)}\right)^{p-1}}\,
$$
%\end{equation}
holds for any $0 < r_{1} < r_{2} < d_0$.
\end{proposition}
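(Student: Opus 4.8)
The plan is to collapse the quantifier ``for every admissible $\eta$'' in the definition of a ring $Q$-homeomorphism into a single sharp inequality, by computing explicitly the infimum of the defining majorant over all admissible $\eta$. Passing to spherical coordinates $x = x_0 + r\omega$, $\omega\in\mathbb{S}^{n-1}$, and using Fubini's theorem together with the definition of $q_{x_0}(r)$, I would first rewrite
$$\int\limits_{\mathbb{A}}Q(x)\,\eta^{p}(|x-x_{0}|)\,dm(x)=\omega_{n-1}\int\limits_{r_{1}}^{r_{2}}q_{x_0}(r)\,r^{n-1}\,\eta^{p}(r)\,dr\,.$$
Since the defining inequality must hold for \emph{every} measurable $\eta\geqslant 0$ with $\int_{r_1}^{r_2}\eta\,dr=1$, it is equivalent to the single inequality obtained by replacing its right-hand side with the infimum over all such $\eta$.

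Next I would evaluate that infimum. Writing $\psi(r)=q_{x_0}(r)\,r^{n-1}$, the task is to minimize $\int_{r_1}^{r_2}\psi\,\eta^{p}\,dr$ subject to $\int_{r_1}^{r_2}\eta\,dr=1$. Factoring $\eta=\big(\psi^{1/p}\eta\big)\cdot\psi^{-1/p}$ and applying H\"older's inequality with exponents $p$ and $p/(p-1)$ gives
$$1=\int\limits_{r_{1}}^{r_{2}}\eta\,dr\leqslant\left(\int\limits_{r_{1}}^{r_{2}}\psi\,\eta^{p}\,dr\right)^{1/p}\left(\int\limits_{r_{1}}^{r_{2}}\psi^{-1/(p-1)}\,dr\right)^{(p-1)/p}\,,$$
whence $\int_{r_1}^{r_2}\psi\,\eta^{p}\,dr\geqslant\big(\int_{r_1}^{r_2}\psi^{-1/(p-1)}\,dr\big)^{-(p-1)}$, with equality for the admissible function $\eta_{0}(r)=c\,\psi(r)^{-1/(p-1)}$, where $c$ is chosen so that $\int_{r_1}^{r_2}\eta_0\,dr=1$. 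Since $\psi^{-1/(p-1)}=r^{-(n-1)/(p-1)}\,q_{x_0}^{-1/(p-1)}$, this infimum is exactly the right-hand side of the proposition. The hypothesis $q_{x_0}(r)\neq\infty$ for a.e.\ $r$ is precisely what keeps the integrand positive a.e.\ and the minimizer $\eta_0$ well defined whenever $\int_{r_1}^{r_2}\psi^{-1/(p-1)}\,dr<\infty$; in the complementary case that integral diverges, both the infimum and the right-hand side equal $0$, and I would dispatch this degenerate case separately.

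Finally, I must reconcile the curve family $\Delta(fS_1,fS_2,fD)$ of the definition with $\Delta(fS_1,fS_2,f\mathbb{A})$ of the statement. One inclusion is immediate: since $f\mathbb{A}\subset fD$, monotonicity of modulus under inclusion of families yields $M_p(\Delta(fS_1,fS_2,f\mathbb{A}))\leqslant M_p(\Delta(fS_1,fS_2,fD))$, which already gives the forward implication. For the converse I would establish the reverse inequality by a minorization argument: for any curve $\gamma$ joining $S_1$ to $S_2$ in $D$, the continuous function $t\mapsto|\gamma(t)-x_0|$ together with a last-exit/first-entry argument produces a subcurve lying in $\mathbb{A}$ that still joins $S_1$ to $S_2$; applying $f$ shows that every curve of $\Delta(fS_1,fS_2,fD)$ contains a subcurve in $\Delta(fS_1,fS_2,f\mathbb{A})$, so any admissible metric for the latter is admissible for the former, giving $M_p(\Delta(fS_1,fS_2,fD))\leqslant M_p(\Delta(fS_1,fS_2,f\mathbb{A}))$. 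The resulting equality of the two moduli is the one genuinely delicate point of the argument; once it is in hand the two forms of the inequality are interchangeable, and both directions of the equivalence follow at once.
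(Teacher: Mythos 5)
Your proposal is correct, and note that the paper itself gives no proof of this proposition --- it is imported from Theorem~2.3 of \cite{S2} --- where the argument is essentially the one you describe: collapse the quantifier over admissible $\eta$ using the polar-coordinate identity $\int_{\mathbb{A}}Q(x)\,\eta^{p}(|x-x_{0}|)\,dm(x)=\omega_{n-1}\int_{r_{1}}^{r_{2}}q_{x_0}(r)\,r^{n-1}\eta^{p}(r)\,dr$, and evaluate the resulting infimum by H\"older's inequality with the extremal $\eta_{0}(r)=c\,r^{-\frac{n-1}{p-1}}q_{x_0}^{-\frac{1}{p-1}}(r)$, handling the case $\int_{r_1}^{r_2}r^{-\frac{n-1}{p-1}}q_{x_0}^{-\frac{1}{p-1}}(r)\,dr=\infty$ by truncation, where both the infimum and the right-hand side are $0$. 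Your reconciliation of $\Delta(fS_{1},fS_{2},fD)$ in the definition with $\Delta(fS_{1},fS_{2},f\mathbb{A})$ in the statement --- monotonicity in one direction and the last-exit/first-entry subcurve minorization in the other, yielding equality of the two $p$-moduli --- is precisely the standard (and usually unstated) step, and it is sound.
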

%где $\mathbb{A}= \mathbb{A}(x_{0}, r_{1}, r_{2})$, $S_{1}=
%S(x_0,r_{1})$ и $S_{2}= S(x_0,r_{2})$}\,

We also need the following statement [15, Thm.~1].
\medskip

\begin{theorem}
Let $D$ be a bounded domain in $\mathbb{R}^{n}$,
$n \geqslant 2$ and let $f: D \rightarrow \mathbb{R}^{n}$ be a ring
$Q$-homeomorphism with respect to $p$-modulus at a point $x_0 \in D$
with $p>n$. Assume that the function $Q$ satisfies the growth condition

%\begin{equation*}
%\label{b2H}
$$
q_{x_{0}}(t) \leqslant q_{0}\,t^{-\alpha},\, q_{0} \in
(0, \infty)\,,\, \alpha  \in [0, \infty)\,,
$$
%\end{equation*}
for a.e. $t\in (0, d_0)$, $d_0 = {\rm dist}(x_{0},
\partial D)$. Then for all $r\in (0, d_0)$, the estimate

%\begin{equation}\label{a1p}
$$
 m(fB(x_0, r)) \geqslant
\Omega_n\,
\left(\frac{p-n}{\alpha+p-n}\right)^{\frac{n(p-1)}{p-n}}q_{0}^{\frac{n}{n-p}}
\, r^{\frac{n(\alpha+ p - n)}{p-n}}\,,
$$
holds,
%\end{equation}
where $\Omega_n$ is a volume of the unit ball in $\mathbb{R}^{n}$.
\end{theorem}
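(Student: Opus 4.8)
The plan is to sandwich the quantity $M_p(\Delta(fS_1,fS_2,f\mathbb{A}))$ between a purely geometric lower bound, expressed through the Lebesgue measures of the images $fB(x_0,r_i)$, and the analytic upper bound furnished by the Proposition, and then to let the inner radius shrink to $0$. Writing $V_i=m(fB(x_0,r_i))$ for $i=1,2$, I would first identify the modulus of the connecting family with the $p$-capacity of the ring condenser whose plates are $\overline{fB(x_0,r_1)}$ and the complement of $fB(x_0,r_2)$, so that $M_p(\Delta(fS_1,fS_2,f\mathbb{A}))=\inf_u\int|\nabla u|^p\,dm$ taken over the admissible Lipschitz functions $u$ (equal to $1$ on the inner plate and to $0$ off the outer one).

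The crux is the geometric lower bound. For an admissible $u$ I would apply the coarea formula to write $\int|\nabla u|^p\,dm=\int_0^1\bigl(\int_{\{u=t\}}|\nabla u|^{p-1}\,d\mathcal{H}^{n-1}\bigr)\,dt$, estimate the inner integral from below by H\"older's inequality combined with the isoperimetric inequality $\mathcal{H}^{n-1}(\{u=t\})\geqslant n\,\Omega_n^{1/n}V(t)^{(n-1)/n}$, where $V(t)=m(\{u>t\})$, and use the identity $-V'(t)=\int_{\{u=t\}}|\nabla u|^{-1}\,d\mathcal{H}^{n-1}$. This reduces matters to minimizing the one-dimensional functional $\int_0^1 V^{(n-1)p/n}(-V')^{-(p-1)}\,dt$ under the boundary data controlled by $V_1$ and $V_2$; solving the associated Euler--Lagrange (Beltrami) equation exhibits the minimizer and yields
\[
M_p(\Delta(fS_1,fS_2,f\mathbb{A}))\ \geqslant\ n\,\Omega_n^{p/n}\left(\frac{p-n}{p-1}\right)^{p-1}\left(V_2^{\beta}-V_1^{\beta}\right)^{-(p-1)},\qquad \beta=\frac{p-n}{n(p-1)}.
\]
Here the hypothesis $p>n$ is essential: it makes $\beta>0$, so that the exponents carry the right sign, the extremal problem is well posed, and the monotonicity of the minimal value in its endpoints absorbs the fact that the true boundary values only satisfy $V(1)\geqslant V_1$ and $V(0)\leqslant V_2$.

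It remains to combine this with the Proposition. Substituting the growth hypothesis $q_{x_0}(t)\leqslant q_0\,t^{-\alpha}$ into the upper bound, the integral $\int_{r_1}^{r_2} r^{-(n-1)/(p-1)}q_{x_0}^{-1/(p-1)}(r)\,dr$ is bounded below by $q_0^{-1/(p-1)}\frac{p-1}{\alpha+p-n}\bigl(r_2^{(\alpha+p-n)/(p-1)}-r_1^{(\alpha+p-n)/(p-1)}\bigr)$, the exponent $(\alpha+p-n)/(p-1)$ being positive. Chaining the lower and upper bounds for $M_p$ and cancelling the factor $\bigl(\frac{p-n}{p-1}\bigr)^{p-1}$ against $\bigl(\frac{p-1}{\alpha+p-n}\bigr)^{p-1}$, I arrive, after extracting a $(p-1)$-th root, at an inequality of the form $C\bigl(r_2^{(\alpha+p-n)/(p-1)}-r_1^{(\alpha+p-n)/(p-1)}\bigr)\leqslant V_2^{\beta}-V_1^{\beta}$.

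Finally I would let $r_1\to0$. Since $f$ is a homeomorphism, $fB(x_0,r_1)$ shrinks to the point $f(x_0)$, whence $V_1\to0$, and both $r_1^{(\alpha+p-n)/(p-1)}$ and $V_1^{\beta}$ tend to $0$. Renaming $r_2=r$ and $V_2=m(fB(x_0,r))$, solving the surviving inequality for $V_2$ by raising to the power $1/\beta=n(p-1)/(p-n)$, and simplifying the constant with the identity $\omega_{n-1}=n\,\Omega_n$, I recover exactly the asserted estimate. The only genuinely hard step is the variational lower bound for the capacity; everything afterwards is bookkeeping with the growth condition and the passage to the limit.
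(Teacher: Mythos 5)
Your proposal is correct and follows essentially the same route as the original argument: the paper itself states Theorem~1 without proof, quoting it from [15, Thm.~1], and the proof there proceeds exactly as you outline --- identify $M_p(\Delta(fS_1,fS_2,f\mathbb{A}))$ with the $p$-capacity of the ring condenser, bound it below by the Kruglikov-type volume estimate $n\,\Omega_n^{p/n}\bigl(\frac{p-n}{p-1}\bigr)^{p-1}\bigl(V_2^{\beta}-V_1^{\beta}\bigr)^{1-p}$ with $\beta=\frac{p-n}{n(p-1)}$ (proved via coarea, H\"older and the isoperimetric inequality, as you sketch), bound it above via Proposition~1 with the growth condition on $q_{x_0}$, and let $r_1\to 0$. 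Your bookkeeping is accurate: the exponents $\frac{n}{n-p}$ on $q_0$ and $\frac{n(\alpha+p-n)}{p-n}$ on $r$ and the constant $\Omega_n\bigl(\frac{p-n}{\alpha+p-n}\bigr)^{\frac{n(p-1)}{p-n}}$ all come out as claimed after using $\omega_{n-1}=n\,\Omega_n$.
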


An upper bound for the area of a disc image
under quasiconformal mappings was established by M.A. Lavrent'ev in his
monograph \cite{Lav}. In \cite[Proposition 3.7]{BGMR}, the
Lavrentiev's inequality was refined in terms of the angular
dilatation. Also in \cite{LS} and \cite{S3}, there were obtained
upper bounds for the area distortion for the ring and lower
$Q$-homeomorphisms. V. Kruglikov has obtained a bound for the
measure of the image of a ball for mappings quasiconformal in the
mean in $\mathbb{R}^{n}$ (see Lemma 9 in \cite{Kru}). In \cite{S2},
there was established an upper bound for the measure of a ball image
under ring $Q$-homeomorphisms with respect to $p$-modulus
$(1<p\leqslant n)$. In \cite{SK1}, \cite{SK2} there were obtained the lower bounds
for the measure of a ball image as $p> n$. Note that the corresponding problems for the
planar case has been studied in \cite{SK3} -- \cite{SK4}.

\medskip

\section{Main results}
\label{sec:2}
% Always give a unique label
% and use \ref{<label>} for cross-references
% and \cite{<label>} for bibliographic references
% use \sectionmark{}
% to alter or adjust the section heading in the running head
In the following theorem, we provide a lower bound for upper limits
of distance ratios under ring $Q$-homeomorphisms with respect to $p$-modulus
with $p>n$.

\medskip

\begin{theorem}
Let $f: \mathbb{B}^{n} \rightarrow \mathbb{R}^{n}$ be a ring
$Q$-homeomorphism with respect to $p$-modulus at a point $x_0 = 0$
with $p>n$ and $f(0) = 0$. Assume that the function $Q$ satisfies the condition

\begin{equation*}
\label{b2H} q_{x_{0}}(t) \leqslant q_{0}\,t^{-\alpha},\, q_{0} \in
(0, \infty)\,,\, \alpha  \in [0, \infty),
\end{equation*}
for a.e. $t\in (0, \varepsilon_0)$, $\varepsilon_0 \in (0, 1)$. Then

\begin{equation}
\label{a1p}
 \limsup\limits_{x\rightarrow 0} \frac{|f(x)|}{|x|^{\frac{\alpha+p-n}{p-n}} } \geqslant \, \left(\frac{p-n}{\alpha+p-n}\right)^{\frac{p-1}{p-n}}
 \, q_{0}^{\frac{1}{n-p}}\,.
\end{equation}
\end{theorem}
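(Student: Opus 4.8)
The plan is to combine the measure lower bound furnished by the preceding theorem with the elementary fact that the Lebesgue measure of the image of a ball is controlled from above by the maximal distance distortion on that ball.

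First I would fix a radius $r \in (0, \varepsilon_0)$. Since the growth hypothesis $q_{x_0}(t) \le q_0 t^{-\alpha}$ holds only for a.e. $t \in (0, \varepsilon_0)$, I would apply the preceding theorem to the restriction of $f$ to the bounded subdomain $B(0, \varepsilon_0)$ (for which $d_0 = \varepsilon_0$ and the hypothesis is satisfied on the whole of $(0, d_0)$); $f$ is still a ring $Q$-homeomorphism at $0$ there, since the defining inequality holds a fortiori for rings contained in $B(0,\varepsilon_0)$. This yields, for every such $r$,
$$
m(fB(0, r)) \ge \Omega_n \left(\frac{p-n}{\alpha+p-n}\right)^{\frac{n(p-1)}{p-n}} q_0^{\frac{n}{n-p}} \, r^{\frac{n(\alpha+p-n)}{p-n}}.
$$

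Next I would bound the same measure from above. Set $M(r) = \max_{|x|\le r} |f(x)|$, which is attained at some $x_r$ with $|x_r| \le r$ by continuity of $f$ and compactness of $\overline{B(0,r)}$. Since every point of $fB(0,r)$ has modulus at most $M(r)$, we have $fB(0,r) \subseteq \overline{B(0, M(r))}$ and hence $m(fB(0,r)) \le \Omega_n M(r)^n$. Comparing the two estimates, cancelling $\Omega_n$, and taking $n$-th roots gives
$$
M(r) \ge \left(\frac{p-n}{\alpha+p-n}\right)^{\frac{p-1}{p-n}} q_0^{\frac{1}{n-p}} \, r^{\frac{\alpha+p-n}{p-n}}.
$$
Finally, writing $\beta = \frac{\alpha+p-n}{p-n}$, which is strictly positive because $p>n$ forces $p-n>0$ and thus $\alpha+p-n>0$, and noting that $f(0)=0$ together with injectivity gives $0 < |x_r| \le r$ for small $r$, I would use $\beta>0$ to get
$$
\frac{|f(x_r)|}{|x_r|^{\beta}} = \frac{M(r)}{|x_r|^{\beta}} \ge \frac{M(r)}{r^{\beta}} \ge \left(\frac{p-n}{\alpha+p-n}\right)^{\frac{p-1}{p-n}} q_0^{\frac{1}{n-p}}.
$$
As $r \to 0$ the points $x_r \to 0$, so the limit superior over $x\to 0$ of the ratio dominates the right-hand constant, which is precisely the assertion~(\ref{a1p}).

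The genuine analytic work is entirely absorbed into the measure estimate of the preceding theorem, so the one point demanding care is purely organizational: matching the hypotheses, i.e. justifying the passage to the subdomain $B(0,\varepsilon_0)$ and to radii $r < \varepsilon_0$, which is legitimate precisely because only the behaviour of $f$ near the origin enters the $\limsup$. The remaining inequalities are elementary.
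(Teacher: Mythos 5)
Your proposal is correct and follows essentially the same route as the paper: the paper likewise derives $L_{f}(r)=\max_{|x|=r}|f(x)| \geqslant \left(\frac{p-n}{\alpha+p-n}\right)^{\frac{p-1}{p-n}}q_{0}^{\frac{1}{n-p}}\,r^{\frac{\alpha+p-n}{p-n}}$ by comparing the measure lower bound of Theorem~1 with the trivial inclusion $fB(0,r)\subseteq \overline{B(0,L_f(r))}$, then passes to the $\limsup$. Your extra care in restricting to the subdomain $B(0,\varepsilon_0)$ so that the growth hypothesis holds on all of $(0,d_0)$ is a legitimate tightening of a step the paper leaves implicit, but it does not change the argument.
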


\begin{proof} We denote by $L_{f}(r) = \max\limits_{|x| =r} |f(x)|$. Since $f(0) = 0$,\, $\Omega_{n}\,L^{n}_{f}(r) \geqslant m(fB(0, r))$.
Hence,
$$
L_{f}(r) \geqslant \left(\frac{m(fB(0, r))}{\Omega_{n}}\right)^{\frac{1}{n}}\,.
$$

Then, by Theorem~1, we have
$$
L_{f}(r) \geqslant \left(\frac{m(fB(0, r))}{\Omega_{n}}\right)^{\frac{1}{n}} \geqslant
\left(\frac{p-n}{\alpha+p-n}\right)^{\frac{p-1}{p-n}}q_{0}^{\frac{1}{n-p}}
\, r^{\frac{\alpha+ p - n}{p-n}}\,
$$
for any $r \in [0, \varepsilon_0)$.

Thus, we obtain
$$
\limsup\limits_{x\rightarrow 0} \frac{|f(x)|}{|x|^{\frac{\alpha+p-n}{p-n}} } =
\limsup\limits_{r\rightarrow 0} \frac{L_{f}(r)}{r^{\frac{\alpha+p-n}{p-n}} } \geqslant
\left(\frac{p-n}{\alpha+p-n}\right)^{\frac{p-1}{p-n}}q_{0}^{\frac{1}{n-p}}\,.
$$

This completes the proof of Theorem~2.
\end{proof}

Assuming a stronger growth condition on the majorant $Q$ we deduce

%\medskip

\begin{corollary}
Let $f: \mathbb{B}^{n} \rightarrow \mathbb{R}^{n}$ be a ring
$Q$-homeomorphism with respect to $p$-modulus at a point $x_0 = 0$
with $p>n$ and $f(0) = 0$. Assume that the function $Q$ satisfies the condition

\begin{equation*}
\label{b2H} Q(x) \leqslant K\,|x|^{-\alpha},\, K \in
(0, \infty)\,,\, \alpha  \in [0, \infty),
\end{equation*}
for a.e. $x\in B(0, \varepsilon_0)$, $\varepsilon_0 \in (0, 1)$. Then

\begin{equation*}
\label{a1p}
 \limsup\limits_{x\rightarrow 0} \frac{|f(x)|}{|x|^{\frac{\alpha+p-n}{p-n}} } \geqslant \, \left(\frac{p-n}{\alpha+p-n}\right)^{\frac{p-1}{p-n}}
 \, K^{\frac{1}{n-p}}\,.
\end{equation*}
\end{corollary}

\medskip

Letting $\alpha =0$ in Theorem~2, we obtain the following statement.

\medskip

\begin{corollary}
Let $f: \mathbb{B}^{n} \rightarrow \mathbb{R}^{n}$ be a ring
$Q$-homeomorphism with respect to $p$-modulus at a point $x_0 = 0$
with $p>n$ and $f(0) = 0$. Assume that the function $Q$ satisfies the condition

\begin{equation*}
\label{b2H} q_{x_{0}}(t) \leqslant q_{0},\, \, q_{0} \in
(0, \infty),
\end{equation*}
for a.e. $t\in (0, \varepsilon_0)$, $\varepsilon_0 \in (0, 1)$. Then

%\begin{equation}\label{a1p}
$$
 \limsup\limits_{x\rightarrow 0} \frac{|f(x)|}{|x|} \geqslant \, q_{0}^{\frac{1}{n-p}}\,.
$$
%\end{equation}
\end{corollary}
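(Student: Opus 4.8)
The plan is to obtain this corollary as the special case $\alpha = 0$ of Theorem~2; no new analytic input is required. First I would observe that the hypothesis $q_{x_0}(t) \leqslant q_0$ for a.e.\ $t \in (0, \varepsilon_0)$ is precisely the growth condition $q_{x_0}(t) \leqslant q_0\,t^{-\alpha}$ appearing in Theorem~2 with the exponent $\alpha$ set equal to $0$ (since $t^{0} = 1$). Thus all hypotheses of Theorem~2 hold with $\alpha = 0$, and its conclusion may be invoked verbatim.

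Next I would substitute $\alpha = 0$ into the conclusion
$$
\limsup\limits_{x \rightarrow 0} \frac{|f(x)|}{|x|^{\frac{\alpha+p-n}{p-n}}} \geqslant \left(\frac{p-n}{\alpha+p-n}\right)^{\frac{p-1}{p-n}} q_0^{\frac{1}{n-p}}
$$
of Theorem~2 and simplify the two $\alpha$-dependent quantities. The exponent of $|x|$ in the denominator reduces to $\frac{p-n}{p-n} = 1$, so the ratio becomes $|f(x)|/|x|$. Simultaneously the constant prefactor collapses to $\left(\frac{p-n}{p-n}\right)^{\frac{p-1}{p-n}} = 1$, leaving only the factor $q_0^{\frac{1}{n-p}}$ on the right-hand side. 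This yields exactly the asserted inequality
$$
\limsup\limits_{x \rightarrow 0} \frac{|f(x)|}{|x|} \geqslant q_0^{\frac{1}{n-p}}.
$$

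Since every step is a direct specialization, there is no genuine obstacle here; the only point to verify is that the condition $p > n$ keeps all exponents well-defined and prevents any degeneracy. Indeed $p - n > 0$ guarantees that $\frac{p-1}{p-n}$ is positive, that $\frac{1}{n-p}$ is a finite (negative) number, and that setting $\alpha = 0$ does not annihilate any denominator. The corollary therefore follows immediately from Theorem~2.
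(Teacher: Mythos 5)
Your proposal is correct and follows exactly the paper's own route: the paper obtains this corollary by the remark ``Letting $\alpha = 0$ in Theorem~2,'' which is precisely your specialization, with the exponent $\frac{\alpha+p-n}{p-n}$ reducing to $1$ and the prefactor $\left(\frac{p-n}{\alpha+p-n}\right)^{\frac{p-1}{p-n}}$ collapsing to $1$. Your added check that $p>n$ keeps the exponents nondegenerate is a harmless bonus beyond what the paper records.
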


Similarly to Corollary~1, we get

%\medskip

\begin{corollary}
Let $f: \mathbb{B}^{n} \rightarrow \mathbb{R}^{n}$ be a ring
$Q$-homeomorphism with respect to $p$-modulus at a point $x_0 = 0$
with $p>n$ and $f(0) = 0$. If the function $Q(x) \leqslant K$,
for a.e. $x\in \mathbb{B}^{n}$, $K \in (0, \infty)$, then

%\begin{equation}\label{a1p}
$$
 \limsup\limits_{x\rightarrow 0} \frac{|f(x)|}{|x|} \geqslant \, K^{\frac{1}{n-p}}\,.
$$
%\end{equation}
\end{corollary}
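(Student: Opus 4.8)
The plan is to reduce the pointwise majorant condition $Q(x) \le K$ to the sphere-average condition already handled in Corollary~3, and then invoke that corollary directly. This is the exact analogue, in the case $\alpha = 0$, of the passage from Theorem~2 to Corollary~1.

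First I would move from the ball to the spheres by means of polar coordinates and Fubini's theorem. Since $Q(x) \le K$ for a.e. $x \in \mathbb{B}^n$, the exceptional set $\{x : Q(x) > K\}$ has $n$-dimensional Lebesgue measure zero; writing $dm(x) = t^{n-1}\, dt\, d\mathcal{A}$, this forces, for a.e. $t \in (0,\varepsilon_0)$, the restriction of $Q$ to the sphere $S(0,t)$ to satisfy $Q(x) \le K$ for surface-a.e. $x \in S(0,t)$.

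Next I would estimate the integral mean over such a sphere. On $S(0,t)$ every point satisfies $|x| = t$, and the surface area of $S(0,t)$ equals $\omega_{n-1}\, t^{n-1}$. Hence, for a.e. $t \in (0,\varepsilon_0)$,
$$
q_{x_0}(t) = \frac{1}{\omega_{n-1}\, t^{n-1}} \int\limits_{S(0,t)} Q(x)\, d\mathcal{A} \leqslant \frac{K}{\omega_{n-1}\, t^{n-1}} \int\limits_{S(0,t)} d\mathcal{A} = K\,.
$$
This is precisely the hypothesis of Corollary~3 with $q_0 = K$.

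Finally, applying Corollary~3 gives
$$
\limsup\limits_{x \rightarrow 0} \frac{|f(x)|}{|x|} \geqslant q_0^{\frac{1}{n-p}} = K^{\frac{1}{n-p}}\,,
$$
which is the desired conclusion. The only step requiring any care is the measure-theoretic passage via Fubini from the a.e. pointwise bound on $\mathbb{B}^n$ to an a.e. bound on the sphere averages $q_{x_0}(t)$; I do not expect this to present a genuine obstacle, and everything beyond it is an immediate substitution into the already-established estimate of Corollary~3.
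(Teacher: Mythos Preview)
Your argument is correct and is exactly the route the paper takes: it states this corollary with the remark ``Similarly to Corollary~1, we get,'' i.e., pass from the pointwise bound $Q(x)\le K$ to the sphere-average bound $q_{x_0}(t)\le K$ and then specialize the earlier result with $\alpha=0$. The only slip is a labeling one: the corollary you invoke (the one assuming $q_{x_0}(t)\le q_0$) is Corollary~2 in the paper's numbering, not Corollary~3, which is the very statement you are proving; also, since here $Q\le K$ on all of $\mathbb{B}^n$, you may take any $\varepsilon_0\in(0,1)$ rather than importing an undefined $\varepsilon_0$.
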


Now we verify the sharpness of the bound in our main result.

\medskip

{\it Example.} Let $p > n$ and $q_{0} >0$. Define $f_0:\mathbb{B}^{n} \to \mathbb{R}^{n}$ by

%\begin{equation}
$$
f_0(x)=\begin{cases} q_{0}^{\frac{1}{n-p}} \left(\frac{p-n}{\alpha+p-n}\right)^{\frac{p-1}{p-n}}\,  |x|^{\frac{\alpha+p-n}{p-n}}\, \frac{x}{|x|} \, ,&  x \neq 0\\
0,&  x=0 \,.\end{cases}\,
%\end{equation}
$$
It can be easily seen that the estimate (\ref{a1p}) is sharp, and it
becomes an equality under $f_0$\,.

Let us show that $f_0$ is a ring $Q$-homeomorphism with
respect to $p$-modulus with $Q(x) =
q_{0}\,|x|^{-\alpha}$ at $x_{0}= 0$ with $p> n$. Clearly,
$q_{x_{0}}(t) = q_{0}\,t^{-\alpha}$. Consider a ring $\mathbb{A}(0,
r_{1}, r_{2})$, $0< r_{1} < r_{2} < 1$. Note that the mapping
$f_{0}$ maps the ring $\mathbb{A}(0, r_{1}, r_{2})$ onto the ring
$\widetilde{\mathbb{A}}(0, \widetilde{r}_{1}, \widetilde{r}_{2})$,
where
$$
%\begin{equation}
%\label{a7}
\widetilde{r}_{i} = q_{0}^{\frac{1}{n-p}}\,
\left(\frac{p-n}{\alpha+p-n}\right)^{\frac{p-1}{p-n}}\,
r_{i}^{\frac{\alpha+p-n}{p-n}}\,, \quad i = 1,\,2.
%\end{equation}
$$
Denote by $\Gamma$ a set of all curves that join the spheres
$S_{1} = S(0,r_{1})$ and $S_{2} = S(0,r_{2})$ in the ring $\mathbb{A} = \mathbb{A}(0, r_{1},
r_{2})$. Then one can calculate the $p$-modulus of the family of curves
$f_{0}\Gamma$ in an implicit form:

$$
%\begin{equation}
%\label{a8}
M_{p}(f_{0}\Gamma) = \omega_{n-1}\,
\left(\frac{p-n}{p-1}\right)^{p-1}\,
\left(\widetilde{r}_{2}^{\frac{p-n}{p-1}} -
\widetilde{r}_{1}^{\frac{p-n}{p-1}}\right)^{1-p}\,
%\end{equation}
$$
(see, e.g., (2) in \cite{Ge}).

Substituting the values $\widetilde{r}_{1}$
and $\widetilde{r}_{2}$, defined above, in the previous equality one gets
%\begin{equation}
%\label{a9}
 $$
 M_{p}(f_{0}\Gamma) =
 \omega_{n-1}\, q_{0}\, \left(\frac{\alpha+p-n}{p-1}\right)^{p-1}\, \left(r_{2}^{\frac{\alpha+p-n}{p-1}} -
 r_{1}^{\frac{\alpha+p-n}{p-1}}\right)^{1-p}\,.
$$
  %\end{equation}

Note that the last equality can be written as
$$
%\begin{equation}
%\label{a30}
M_{p}(f_{0}\Gamma) =
\frac{\omega_{n-1}}{\left(\int\limits_{r_{1}}^{r_{2}}\frac{dt}{t^\frac{n-1}{p-1}\,q_{x_0}^{\frac{1}{p-1}}(t)}\right)^{p-1}}\,,
%\end{equation}
$$
where $q_{x_0}(t) = q_{0}\,t^{-\alpha}$ and $x_{0}= 0$.

Hence, by Proposition 1, the homeomorphism $f_{0}$ is a ring
$Q$-homeomorphism with respect to $p$-modulus for $p>n$ with
$Q(x) = q_{0}\,|x|^{-\alpha}$ at origin.

{\it Acknowledgments}. This work was supported by a grant from the Simons Foundation (1030291,
R.R.S., B.A.K.).

%%%%%%%%%%%%%%%%%%%%%%%% referenc.tex %%%%%%%%%%%%%%%%%%%%%%%%%%%%%%
% sample references
% %
% Use this file as a template for your own input.
%
%%%%%%%%%%%%%%%%%%%%%%%% Springer-Verlag %%%%%%%%%%%%%%%%%%%%%%%%%%
%
% BibTeX users please use
% \bibliographystyle{}
% \bibliography{}
%

\end{document}